\definecolor{darkbrown}{RGB}{170,100,0} 
\definecolor{darkdarkbrown}{RGB}{110,70,0} 
\def\@fnsymbol#1{\ensuremath{\ifcase#1\or \dagger\or \ddagger\or
   \mathsection\or \mathparagraph\or \|\or **\or \dagger\dagger
   \or \ddagger\ddagger \else\@ctrerr\fi}}
\newtheorem{theorem}{Theorem}[section]
\newtheorem{lemma}{Lemma}[section]
\newtheorem{definition}{Definition}[section]
\newtheorem{remark}{Remark}[section]
\providecommand{\keywords}[1]
{
  \small	
  \textbf{\textit{Keywords---}} #1
}
\definecolor{darkgreen}{rgb}{0.2, 0.5, 0.2}
\DeclareMathOperator*{\argmin}{arg\,min}
\newcommand{\pred}[1]{\delta\left[#1\right]}
\newcommand{\set}[1]{\left\{ #1 \right\}}
\newcommand{\eqdef}{\doteq}
\newcommand{\bbR}{\mathbb{R}}
\newcommand{\bbQ}{\mathbb{Q}}
\newcommand{\N}{\mathbb{N}}
\newcommand{\bbN}{\mathbb{N}}
\newcommand{\abs}[1]{\left| #1 \right|}
\newcommand{\nrm}[1]{\left\Vert #1 \right\Vert}
\newcommand{\PR}[2][]{\mathbb{P}_{#1}\left( #2 \right)}
\newcommand{\eps}{\varepsilon}
\newcommand{\bigO}{\mathcal{O}}
\newcommand{\rev}{_{\mathsf{rev}}}
\newcommand{\test}{_{\mathsf{test}}}
\newcommand{\sym}{_{\mathsf{sym}}}
\newcommand{\fshr}{\mathfrak{g}}
\newcommand{\calX}{\mathcal{X}}
\newcommand{\calH}{\mathcal{H}}
\newcommand{\calY}{\mathcal{Y}}
\newcommand{\calP}{\mathcal{P}}
\newcommand{\calE}{\mathcal{E}}
\newcommand{\calW}{\mathcal{W}}
\newcommand{\calQ}{\mathcal{Q}}
\newcommand{\calV}{\mathcal{V}}
\newcommand{\Vtest}{\calV \test}
\newcommand{\calD}{\mathcal{D}}
\newcommand{\calR}{\mathcal{R}}
\newcommand{\calS}{\mathcal{S}}
\newcommand{\rf}[1]{\overline{#1}}
\author[1]{Geoffrey Wolfer \thanks{email: geoffrey.wolfer@riken.jp. \\
The author is supported by the Special Postdoctoral Researcher Program (SPDR) of RIKEN.}}
\author[2]{Shun Watanabe \thanks{email: shunwata@cc.tuat.ac.jp. \\ The author is supported in part by Japan Society for the Promotion of Science KAKENHI under Grant 20H02144.}}
\affil[1]{RIKEN Center for AI Project}
\affil[2]{Department of Computer and Information Sciences \protect\\ Tokyo University of Agriculture and Technology}
\date{\today}
\title{\vspace{-1.5cm} A Geometric Reduction Approach for \\ Identity Testing of Reversible Markov Chains}
\begin{document}
\maketitle

\begin{abstract}
    We consider the problem of testing the identity of a reversible Markov chain against a reference from a single trajectory of observations. Employing the recently introduced notion of a lumping-congruent Markov embedding, we show that, at least in a mildly restricted setting, testing identity to a reversible chain reduces to testing to a symmetric chain over a larger state space and recover state-of-the-art sample complexity for the problem.
\end{abstract}

\keywords{Information geometry; Irreducible Markov chain; Identity testing; Congruent embedding; Markov morphism; Lumpability.}

\section{Introduction}
\label{section:introduction}
Uniformity testing is the flagship problem of distribution property testing. From $n$ independent observations sampled from an unknown distribution $\mu$ on a finite space $\calX$, the goal is to distinguish between the two cases where $\mu$ is uniform and $\mu$ is $\eps$-far from being uniform with respect to some notion of distance.
The complexity of this problem is known to be of the order of $\tilde{\Theta}(\sqrt{\abs{\calX}}/\eps^2)$ \cite{paninski2008coincidence}, which compares favorably with the linear dependency in $\abs{\calX}$ required for estimating the distribution to precision $\eps$ \cite{waggoner2015lp}. 
Interestingly, the uniform distribution can be replaced with some arbitrary reference at same statistical cost.
In fact, Goldreich \cite{goldreich2016uniform} proved that the latter problem formally reduces to the former. Inspired by his approach, we seek and obtain a reduction result in the much less understood and more challenging Markovian setting.

\paragraph{Informal Markovian problem statement ---}

The scientist is given the full description of a reference transition matrix $\rf{P}$ and a single Markov chain (MC) $X_1^n$ sampled with respect to some unknown transition operator $P$ and arbitrary initial distribution.
For fixed proximity parameter $\eps > 0$, the goal is to design an algorithm that distinguishes between the two cases $P = \rf{P}$ and $K(P,\rf{P}) > \eps$, with high probability, where $K$ is a contrast function between stochastic matrices.

\paragraph{Related work ---}

Under the contrast function \eqref{eq:kazakos-contrast} described in Section~\ref{section:preliminaries}, and the hypothesis that $P$ and $\rf{P}$ are both irreducible and symmetric over a finite space $\calX$, \cite{daskalakis2018testing} constructed a tester with sample complexity $\tilde{\bigO}(\abs{\calX}/\eps + h)$, where $h$ \cite[Definition~3]{daskalakis2018testing} corresponds to some hitting property of the chain, and a lower bound in $\Omega(\abs{\calX}/\eps)$.
In \cite{pmlr-v99-cherapanamjeri19a}, a graph partitioning algorithm delivers, under the same symmetry assumption, a testing procedure with sample complexity $\bigO(\abs{\calX}/\eps^4)$, i.e. independent on hitting properties.
More recently, \cite{pmlr-v151-fried22a} relaxed the symmetry requirement,
replacing it with a more natural reversibility assumption.
Their algorithm has a sample complexity of $\bigO(1/(\rf{\pi}_\star \eps^4))$, where $\rf{\pi}_\star$ is the minimum stationary probability of $\rf{P}$, gracefully recovering \cite{pmlr-v99-cherapanamjeri19a} under symmetry.
In parallel, \cite{pmlr-v108-wolfer20a} started the research program of inspecting the problem under the infinity norm for matrices, and derived nearly minimax-optimal bounds.

\paragraph{Contribution ---}

We show how to mostly recover \cite{pmlr-v151-fried22a} under additional assumptions (see Section~\ref{section:restricted-problem}), with a technique based on a geometry preserving embedding. We obtain a more economical proof than \cite{pmlr-v151-fried22a} who went through the process of re-deriving a graph partitioning algorithm for the reversible case. 
Furthermore, our approach, by its generality, is also applicable to related inference problems.

\section{Preliminaries}
\label{section:preliminaries}
We let $\calX, \calY$ be finite sets, and denote $\calP(\calX)$ the set of all probability distributions over $\calX$.
All vectors are written as row vectors.
For matrices $A, B$, $A \circ B$ is their Hadamard product and $\rho(A)$ is the spectral radius of $A$. For $n \in \bbN$, we use the compact notation $x_1^n = (x_1, \dots, x_n)$.
$\calW(\calX)$ is the set of all row-stochastic matrices over the state space $\calX$, and $\pi$ is called a stationary distribution for $P \in \calW(\calX)$ when $\pi P = \pi$.

\paragraph{Irreducibility and reversibility ---} 

We denote $\calW(\calX, \calD)$ the set of irreducible stochastic matrices over a strongly connected digraph $(\calX, \calD)$.
When $P \in \calW(\calX, \calD)$, $\pi$ is unique and we denote $\pi_\star = \min_{x \in \calX} \pi(x) > 0$.
When $P$ verifies the detailed-balance equation $\pi(x)P(x,x') = \pi(x') P(x',x)$ for any $(x,x') \in \calD$,
we say that $P$ is reversible.

\paragraph{Lumpability ---}

In contradistinction with the distribution setting, merging symbols in a Markov chain may break the Markov property. 
For $P \in \calW(\calY, \calE)$ and a surjective map $\kappa \colon \calY \to \calX$, merging elements of $\calY$ together, we say that $P$ is $\kappa$-lumpable \cite{kemeny1983finite} when the resulting process still defines a MC. If so, the resulting transition matrix can be found in \cite[Theorem~6.3.2]{kemeny1983finite}, which we denote as $\kappa_\star P \in \calW(\calX, \kappa_2(\calD))$, with
$$\kappa_2(\calD) \eqdef \set{ (x,x') \in \calX^2 \colon \exists (y,y') \in \calE, (\kappa(y), \kappa(y')) = (x,x') }.$$

\paragraph{Contrast function ---}

We consider the following notion of discrepancy between two stochastic matrices $P, P' \in \calW(\calX)$,
\begin{equation}
    \label{eq:kazakos-contrast}
K(P,P') \eqdef 1 - \rho \left( P^{\circ 1/2} \circ P'^{\circ 1/2} \right).
\end{equation}
Although $K$ first appeared in \cite{daskalakis2018testing} in the context of MC testing, its inception can be traced back to Kazakos \cite{kazakos1978bhattacharyya}.
It is instructive to observe that
$K$ vanishes on chains that share an identical component and does not satisfy the triangle inequality for reducible matrices, hence is not a proper metric on $\calW(\calX)$ \cite[p.10,~footnote~13]{daskalakis2018testing}. Some additional properties of $K$ of possible interest are listed in \cite[Section~7]{pmlr-v151-fried22a}.

\paragraph{Reduction approach for identity testing of distributions ---}

Problem reduction is ubiquitous in the property testing literature.
Our work takes inspiration from \cite{goldreich2016uniform}, who introduced two so-called ``stochastic filters" in order to show how in the distribution setting, identity testing was reducible to uniformity testing, thereby recovering the known complexity of $\bigO(\sqrt{\abs{\calX}}/\eps^2)$ obtained more directly by \cite{valiant2017automatic}. 
Other notable works include
\cite{diakonikolas2016new}, who reduced a collection of distribution testing problems to $\ell_2$-identity testing.

\section{The restricted identity testing problem}
\label{section:restricted-problem}
We cast our problem in the minimax framework by defining the risk $\calR_n(\eps)$,
\begin{equation*}
    \calR_n(\eps) \eqdef \min_{\phi \colon \calX^n \to \set{0, 1}} \set{ \PR[X_1^n \sim \rf{\pi}, \rf{P}]{\phi(X_1^n) = 1} + \max_{P \in \calH_1(\eps)} \PR[X_1^n \sim \pi, P]{\phi(X_1^n) = 0} },
\end{equation*}
sample complexity $n_\star(\eps, \delta) \eqdef \min \set{n \in \N \colon \calR_n(\eps) < \delta}$, and where 
$$\calH_0 = \set{\rf{P}}, \qquad \calH_1(\eps) = \set{ P \in \Vtest \colon K(P, \rf{P}) > \eps},$$ with $\calH_0, \calH_1(\eps) \subset \Vtest$, the subset of stochastic matrices under consideration.
We note the presence of an exclusion region, and that the problem can be regarded as a Bayesian testing problem with a prior which is uniform over $\calH_0$ and $\calH_1(\eps)$ and vanishes on the exclusion region.
We briefly recall the assumptions made in \cite{pmlr-v151-fried22a}. For $(P, \rf{P}) \in (\calH_1(\eps), \calH_0)$,
\begin{enumerate}[$({A}.1)$]
    \item $P$ and $\rf{P}$ are irreducible and reversible.
    \item $P$ and $\rf{P}$ share the same stationary distribution $\rf{\pi} = \pi$.
\footnote{We note that \cite{pmlr-v151-fried22a} also slightly loosen the requirement of having a  matching stationary distributions to being close in the sense where $\nrm{\pi/\rf{\pi} - 1}_\infty < \eps$.}
\end{enumerate}
The following additional assumptions will make our approach readily applicable.
\begin{enumerate}[$({B}.1)$]
    \item $P, \rf{P}$ and share the same connection graph, $P, \rf{P} \in \calW(\calX, \calD)$.
    \item The common stationary probability is rational, $\rf{\pi} \in \bbQ^\calX$.
\end{enumerate}

\begin{remark}
A sufficient condition for $\rf{\pi} \in \bbQ^\calX$ is $\rf{P}(x,x') \in \bbQ$ for any $x,x' \in \calX$.
\end{remark}
Without loss of generality, we express $\rf{\pi} = \left( p_1, p_2, \dots, p_{\abs{\calX}} \right) / \Delta$,
for some $\Delta \in \bbN$, and $p \in \bbN^{\abs{\calX}}$ where $0 < p_1 \leq p_2 \leq \dots \leq p_{\abs{\calX}} < \Delta$.
We denote by $\Vtest$ the set of stochastic matrices that verify assumptions $(A.1), (A.2), (B.1)$ and $(B.2)$ for some fixed and positive $\rf{\pi} \in \calP(\calX)$. 
Our below-stated theorem provides an upper bound on the sample complexity $n_\star(\eps, \delta)$ in $\widetilde{\bigO}(1/(\rf{\pi}_\star \eps))$.

\begin{theorem}
\label{theorem:reduction-theorem}
Let $\eps, \delta \in (0,1)$ and let $\rf{P} \in \Vtest \subset \calW(\calX, \calD)$. There exists a testing procedure $\phi \colon \calX^n \to \set{0,1}$, with $n = \tilde{\bigO}(1/(\rf{\pi}_\star \eps^4))$, such that the following holds. For any $P \in \Vtest$ and $X_1^n$ sampled according to $P$, $\phi$ distinguishes between the cases $P = \rf{P}$ and $K(P, \rf{P}) > \eps$ with probability at least $1 - \delta$.
\end{theorem}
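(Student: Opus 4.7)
The plan is to exhibit a lumping-congruent Markov embedding $E$ that sends every reversible $P \in \Vtest$ to a symmetric stochastic matrix on an enlarged state space $\calY$ of size $\abs{\calY} = \Delta$, preserves the contrast $K$, and admits a trajectory lift requiring only the reference $\rf{P}$. Chaining this lift with a known symmetric-chain identity tester will then yield the target complexity.

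First, I would define the embedding on $\calY \eqdef \set{(x,k) : x \in \calX,\ 1 \leq k \leq p_x}$, so that $\abs{\calY} = \Delta$, via
\[
    E(P)((x,k),(x',k')) \eqdef \frac{P(x,x')}{p_{x'}}.
\]
A direct computation then shows that $E(P)$ is stochastic, its stationary law is uniform on $\calY$ (using $\rf{\pi} P = \rf{\pi}$), and it is symmetric precisely because of detailed balance $\rf{\pi}(x) P(x,x') = \rf{\pi}(x') P(x',x)$. With the merging map $\kappa(x,k) = x$, summing over the $p_{x'}$ copies yields $\kappa_\star E(P) = P$, so $E(P)$ is $\kappa$-lumpable and lumps to $P$. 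For the contrast, the rows of $E(P)^{\circ 1/2} \circ E(\rf{P})^{\circ 1/2}$ indexed by $(x,k)$ do not depend on $k$; its non-zero spectrum therefore coincides with that of the $\abs{\calX} \times \abs{\calX}$ matrix $P^{\circ 1/2} \circ \rf{P}^{\circ 1/2}$, giving $K(E(P), E(\rf{P})) = K(P, \rf{P})$.

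Next, I would produce samples from $E(P)$ without knowing $P$ by lifting the observed trajectory: given $X_1^n \sim P$, draw independent $K_i \sim \Uniform\set{1,\ldots,p_{X_i}}$ and set $Y_i = (X_i, K_i)$. A routine joint-law calculation will show that $Y_1^n$ is distributed exactly as a trajectory of $E(P)$ launched from the lifted initial law $(x,k) \mapsto \mu(x)/p_x$, where $\mu$ is the initial law of $X_1$. Crucially, only the integers $p_x$ extracted from $\rf{\pi}$ enter the construction, so the lift is oblivious to the unknown $P$. Feeding $Y_1^n$ into the symmetric-chain identity tester of \cite{pmlr-v99-cherapanamjeri19a} against the explicit reference $E(\rf{P})$ with tolerance $\eps$ will then deliver a tester at sample complexity $\tilde{\bigO}(\abs{\calY}/\eps^4) = \tilde{\bigO}(\Delta/\eps^4) = \tilde{\bigO}(1/(\rf{\pi}_\star \eps^4))$, matching the claim.

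The delicate point is not any single computation but the \emph{design} of the embedding so that the lift is simultaneously distributionally exact for every $P \in \Vtest$ and executable without knowledge of $P$. The lumping-congruent structure, together with the rational-stationary assumption $(B.2)$ which makes $\Delta$ a positive integer, is precisely what delivers both properties at once; the $K$-preservation then follows from the short rank-reduction argument sketched above, and the theorem by plugging the lift into the existing symmetric tester.
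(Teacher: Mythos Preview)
Your proposal is correct and follows the same overall architecture as the paper: the embedding $E$ you write down is exactly the paper's memoryless symmetrizer $\sigma^{\rf{\pi}}_\star$ (Section~\ref{section:symmetrization}), your trajectory lift is the operational simulation the paper imports from \cite{wolfer2022geometric}, and both finish by invoking the symmetric tester of \cite{pmlr-v99-cherapanamjeri19a} on a state space of size $\Delta$. The one substantive difference lies in how contrast preservation $K(E(P),E(\rf{P})) = K(P,\rf{P})$ is established. The paper (Lemma~\ref{lemma:contrast-preservation}) takes an information-theoretic route: it rewrites $K$ via the R\'enyi-$1/2$ divergence rate between path measures, shows (Lemma~\ref{lemma:induced-markov-map-for-path-measure}) that a memoryless embedding induces a bona fide Markov morphism on $\calP(\calX^n)$, and then invokes information monotonicity. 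Your rank-reduction argument---observing that the rows of $E(P)^{\circ 1/2}\circ E(\rf{P})^{\circ 1/2}$ are constant in the second coordinate, so its nonzero spectrum collapses to that of $P^{\circ 1/2}\circ \rf{P}^{\circ 1/2}$---is more elementary and entirely self-contained, but it is tailored to this specific contrast and this specific block structure. The paper's argument, by contrast, transfers verbatim to any $f$-divergence-rate contrast and any memoryless embedding, which is consonant with its information-geometric framing. Both routes are valid; yours is shorter, theirs is more modular.
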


\begin{proof}[sketch]
Our strategy
consists in two steps. First, we employ a transformation on Markov chains termed Markov embedding \cite{wolfer2022geometric} in order to symmetrize both the reference chain (algebraically, by computing the new transition matrix) and the unknown chain (operationally, by simulating an embedded trajectory). Crucially, our transformation preserves the contrast between two chains and their embedded version (Lemma~\ref{lemma:contrast-preservation}).
Second, we invoke the known tester \cite{pmlr-v99-cherapanamjeri19a} for symmetric chains as a black box and report its output.
The proof is deferred to Section~\ref{section:proof-theorem}.
\end{proof}

\begin{remark} Our reduction could also be applied in the robust testing setting \footnote{Note that even in the symmetric setting, the robust problem remains open.}, where the two competing hypotheses are $K(P, \rf{P}) < \eps/2$ and $K(P, \rf{P}) > \eps$.
\end{remark}

\section{Symmetrization of reversible Markov chains}
\label{section:symmetrization}
\paragraph{Information geometry ---}
Our construction and notation follow \cite{nagaoka2005exponential}, who established the dually-flat structure $(\calW(\calX, \calD), \fshr, \nabla^{(e)}, \nabla^{(m)})$ on the space of irreducible stochastic matrices.
Writing $P_\theta \in \calW(\calX, \calD)$ for the transition matrix at coordinates $\theta \in \Theta \subset \bbR^d$, with $d$ the dimension of the manifold, and with the shorthand $\partial_i \eqdef \partial/\partial \theta^i$, recall that the Fisher metric is expressed in the chart induced basis $(\partial_i)_{i \in [d]}$ as
\begin{equation*}
    \fshr_{ij}(\theta) = \sum_{(x,x') \in \calD} \pi_\theta(x) P_\theta(x,x') \partial_i \log P_\theta(x,x') \partial_j \log P_\theta(x,x'), \text{ for } i,j \in [d].
\end{equation*}

\paragraph{Embeddings ---}

In \cite{wolfer2022geometric}, the following notion of an embedding for stochastic matrices is proposed. 

\begin{definition}[Markov embedding for Markov chains \cite{wolfer2022geometric}]
\label{definition:markov-embedding}
We call Markov embedding, a map $\Lambda_\star \colon  \calW(\calX, \calD)  \to \calW(\calY, \calE), P \mapsto \Lambda_\star P$,
such that for any $(y,y') \in \calE$,
$$\Lambda_\star P(y,y') = P(\kappa(y), \kappa(y'))\Lambda(y,y'),$$ and
where $\kappa$ and $\Lambda$ satisfy the following requirements
\begin{enumerate}[$(i)$]
    \item $\kappa \colon \calY \to \calX$ is a lumping function for which $\kappa_2(\calE) = \calD$.
\item $\Lambda$ is a positive function over the edge set, $\Lambda \colon \calE \to \bbR_+$.
\item 
Writing
$\bigcup_{x \in \calX} \calS_x = \calY$ for the partition defined by $\kappa$,
$\Lambda$ is such that for any $y \in \calY$ and $x' \in \calX$, $
    (\kappa(y), x') \in \calD \implies (\Lambda(y,y'))_{y' \in \calS_{x'}} \in \calP(\calS_{x'})$.
\end{enumerate}
\end{definition}
The above embeddings are characterized as the linear maps over lumpable matrices that satisfy some monotonicity requirements and are congruent with respect to the lumping operation \cite[Theorem~3.1]{wolfer2022geometric}.
When for any $y,y' \in \calY$, it additionally holds that $\Lambda(y,y') = \Lambda(y') \pred{ (\kappa(y), \kappa(y')) \in \calD}$, the embedding $\Lambda_\star$ is called memoryless \cite[Section~3.4.2]{wolfer2022geometric} and is e/m-geodesic affine \cite[Th.~3.2, Lemma~3.6]{wolfer2022geometric}, preserving both exponential and mixture families of MC.

Given $\rf{\pi}$ and $\Delta$ as defined in Section~\ref{section:restricted-problem}, from  \cite[Corollary~3.3]{wolfer2022geometric}, there exists a 
lumping function
$\kappa \colon [\Delta] \to \calX$,
and a memoryless embedding $\sigma^{\rf{\pi}}_\star \colon \calW(\calX, \calD) \to \calW([\Delta], \calE)$
with 
$\calE = \set{ (y,y') \in [\Delta]^2 \colon (\kappa(y), \kappa(y')) \in \calD }$,
such that $\sigma^{\rf{\pi}}_\star \rf{P}$ is symmetric. Furthermore, identifying $\calX = \set{1,2, \dots, \abs{\calX}}$, its existence is given constructively by 
$$\kappa(j) = \argmin_{1 \leq i  \leq \abs{\calX}} \set{ \sum_{k=1}^{i} p_k \geq j }, \text{ with } \sigma^{\rf{\pi}}(j) = p^{-1}_{\kappa(j)}, \text{ for any } 1 \leq j \leq \Delta.$$
As a consequence, we have both,
\begin{enumerate}
    \item The expression of $\sigma^{\rf{\pi}}_\star \rf{P}$ following the algebraic manipulations in Definition~\ref{definition:markov-embedding}.
    \item A random algorithm \cite{wolfer2022geometric} to simulate trajectories from $\sigma^{\rf{\pi}}_\star P$ out of trajectories from $P$ (see \cite[Section~3.1]{wolfer2022geometric}).
\end{enumerate}

\section{Contrast preservation}
\label{section:isometry}
It was established in \cite[Lemma~3.1]{wolfer2022geometric} that Markov embeddings preserve the Fisher information metric, the affine connections and the KL divergence between points.
In this section, we show that memoryless embeddings, such as the symmetrizer $\sigma^{\rf{\pi}}_\star$ introduced in Section~\ref{section:symmetrization}, also preserve the contrast function $K$.
Our proof will rely on first showing that memoryless embeddings induce natural Markov morphisms \cite{cencov1978} from distributions over $\calX^n$ to $\calY^n$.

\begin{lemma}
\label{lemma:induced-markov-map-for-path-measure}
Let 
a lumping function $\kappa \colon \calY \to \calX$, and $$L_\star \colon \calW(\calX, \calD) \rightarrow \calW(\calY, \calE)$$ be a $\kappa$ congruent memoryless Markov embedding. For $P \in \calW(\calX, \calD)$, let $Q^n \in \calP(\calX^n)$ (resp. $\widetilde{Q}^n \in \calP(\calY^n)$) be the unique distribution over stationary paths of length $n$ induced from $P$ (resp. $L_\star P$). Then there exists a Markov morphism $M_\star \colon \calP(\calX^n) \to \calP(\calY^n)$ such that $M_\star Q^n = \widetilde{Q}^n$.
\end{lemma}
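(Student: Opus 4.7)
The plan is to define the Markov morphism $M_\star$ by writing down an explicit stochastic kernel from $\calX^n$ to $\calY^n$, and then verifying $M_\star Q^n = \widetilde{Q}^n$ by direct computation. The construction is essentially forced once one observes how the memoryless hypothesis decouples the embedding data along a path.

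First I would unpack the memoryless assumption: by definition there exists a function $\Lambda \colon \calY \to \bbR_+$ with $\sum_{y \in \calS_x} \Lambda(y) = 1$ for every $x \in \calX$, such that $L_\star P(y,y') = P(\kappa(y), \kappa(y'))\Lambda(y')$ for every $(y,y') \in \calE$. A short calculation using $\pi P = \pi$ and the partition-of-unity property then shows that the stationary distribution of $L_\star P$ takes the product form $\widetilde{\pi}(y) = \pi(\kappa(y)) \Lambda(y)$, where $\pi$ is the stationary distribution of $P$. Substituting this into the definition of $\widetilde{Q}^n$ yields the key factorization
$$\widetilde{Q}^n(y_1^n) = Q^n(\kappa(y_1), \dots, \kappa(y_n)) \prod_{i=1}^n \Lambda(y_i),$$
in which the original path weight and the embedding weights are completely disentangled.

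This factorization dictates the choice of kernel: I would define
$$M(x_1^n; y_1^n) \eqdef \prod_{i=1}^n \Lambda(y_i)\, \pred{\kappa(y_i) = x_i}, \qquad x_1^n \in \calX^n,\ y_1^n \in \calY^n,$$
and then verify two things: (i) $M$ is a genuine Markov kernel, which follows by a product-of-sums argument using $\sum_{y \in \calS_x} \Lambda(y) = 1$; and (ii) the induced map $M_\star$ sends $Q^n$ to $\widetilde{Q}^n$, which is immediate from the factorization above once the indicators collapse the sum over $x_1^n$ to the single term with $x_i = \kappa(y_i)$ for all $i$.

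The main conceptual step, and what I expect to be the only real content of the argument, is the factorization $\widetilde{\pi}(y) = \pi(\kappa(y))\Lambda(y)$ of the stationary distribution of $L_\star P$; everything downstream is bookkeeping. The reason the argument goes through is that memorylessness is exactly the hypothesis that causes the additional randomness introduced by the embedding to be independent across time-steps, so that the a priori path-dependent conditional law of $y_1^n$ given $x_1^n$ collapses into the product kernel above.
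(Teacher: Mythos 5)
Your proposal is correct and follows essentially the same route as the paper's proof: the kernel $M(x_1^n;y_1^n)=\prod_i \Lambda(y_i)\pred{\kappa(y_i)=x_i}$ is exactly the paper's $M^{x_1^n}$ concentrated on $\calS_{x_1^n}$, the stochasticity check is the same product-of-sums argument, and the factorization $\widetilde{Q}^n(y_1^n)=Q^n(\kappa_n(y_1^n))\prod_i\Lambda(y_i)$ is the paper's step $(\spadesuit)$. The only cosmetic difference is that you derive the stationary-distribution identity $\widetilde{\pi}(y)=\pi(\kappa(y))\Lambda(y)$ directly from $\pi P=\pi$, whereas the paper cites it from the embedding reference.
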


\begin{proof}

Let $\kappa_n \colon \calY^n \to \calX^n$ be the lumping function on blocks induced from $\kappa$,
\begin{equation*}
    \forall y_1^n \in \calY^n, \kappa_n(y_1^n) = (\kappa(y_t))_{1 \leq t \leq n} \in \calX^n,
\end{equation*}
and introduce 
\begin{equation*}
\calY^n = \bigcup_{x_1^n \in \calX^n} \calS_{x_1^n}, \text{ with } \calS_{x_1^n} = \set{ y_1^n \in \calY^n \colon \kappa_n(y_1^n) = x_1^n },
\end{equation*}
the partition associated to $\kappa_n$.
For any realizable path $x_1^n, Q^n(x_1^n) > 0$, we define a distribution $M^{x_1^n} \in \calP(\calY^n)$ concentrated on $\calS_{x_1^n}$, and such that for any $y_1^n \in \calS_{x_1^n}$, $
    M^{x_1^n}(y_1^n) = \prod_{t=1}^{n} L(y_t).$
Non-negativity of $M^{x_1^n}$ is immediate, and
\begin{equation*}
\begin{split}
    \sum_{y_1^n \in \calY^n} M^{x_1^n}(y_1^n) &= \sum_{y_1^n \in \calY^n \colon \kappa_n(y_1^n) = x_1^n} M^{x_1^n}(y_1^n) %
    = \prod_{t=1}^{n} \left( \sum_{y_t \in \calS_{x_t}} L(y_t) \right)  = 1,
\end{split}
\end{equation*}
thus $M^{x_1^n}$ is well-defined.
Furthermore, for $y_1^n \in \calY^n$, it holds that
\begin{equation*}
\begin{split}    
\widetilde{Q}^n(y_1^n) &= L_\star \pi(y_1) \prod_{t=1}^{n-1} L_\star P(y_t, y_{t+1}) \stackrel{(\spadesuit)}{=} \pi(\kappa(y_1)) L(y_1) \prod_{t=1}^{n-1} P(\kappa(y_t), \kappa(y_{t+1})) L(y_t) \\
&= Q^n(\kappa(y_1), \dots, \kappa(y_n)) \prod_{t=1}^{n} L(y_t) = Q^n(\kappa_n(y_1^n)) \prod_{t=1}^{n} L(y_t) \\
&= \sum_{x_1^n \in \calX^n} Q^n(\kappa_n(y_1^n)) M^{x_1^n}(y_1^n) = M_\star Q^n(y_1^n),
\end{split}
\end{equation*}
where $(\spadesuit)$ stems from \cite[Lemma~3.5]{wolfer2022geometric}, whence our claim holds.
\end{proof}

Lemma~\ref{lemma:induced-markov-map-for-path-measure} essentially states that
the following diagram commutes
\[ \begin{tikzcd}
  \calW(\calX, \calD) \arrow{r}{L_\star} \arrow[swap]{d}{} & L_\star \calW(\calX, \calD)  \arrow{d}{} \\%
\calQ^n_{\calW(\calX, \calD)} \arrow{r}{M_\star}& \calQ^n_{L_\star \calW(\calX, \calD)},
\end{tikzcd}
\]
for some Markov morphism $M_\star$,
and where we denoted $\calQ^n_{\calW(\calX, \calD)} \subset \calP(\calX^n)$ for the set of all distributions over paths of length $n$ induced from the family $\calW(\calX, \calD)$.
As a consequence, we can unambiguously write $L_\star Q^n \in \calQ^n_{L_\star \calW(\calX, \calD)}$ for the distribution over stationary paths of length $n$ that pertains to $L_\star P$.

\begin{lemma}
\label{lemma:contrast-preservation}
Let $L_\star \colon \calW(\calX, \calD) \to \calW(\calY, \calE)$ be a memoryless embedding,
\begin{equation*}
    K(L_\star P, L_\star \rf{P}) = K( P, \rf{P}).
\end{equation*}
\end{lemma}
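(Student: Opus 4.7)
The plan is to reduce everything to a spectral radius computation. Define $A \eqdef P^{\circ 1/2} \circ \rf{P}^{\circ 1/2} \in \bbR^{\calX \times \calX}$ and $\tilde{A} \eqdef (L_\star P)^{\circ 1/2} \circ (L_\star \rf{P})^{\circ 1/2} \in \bbR^{\calY \times \calY}$; by the definition \eqref{eq:kazakos-contrast} of $K$, it suffices to show $\rho(\tilde{A}) = \rho(A)$. Unfolding the memoryless form $L_\star P(y,y') = P(\kappa(y), \kappa(y')) L(y')$ and the analogous one for $\rf{P}$, the Hadamard product simplifies to $\tilde{A}(y,y') = A(\kappa(y), \kappa(y')) L(y')$ on $\calE$ and vanishes elsewhere.

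I would then represent $\tilde{A}$ as a factored product. Let $\Pi \in \bbR^{\calY \times \calX}$ have entries $\Pi(y,x) = \pred{\kappa(y)=x}$, and $\Sigma \in \bbR^{\calX \times \calY}$ have entries $\Sigma(x,y) = L(y)\pred{\kappa(y)=x}$. The normalization condition $(iii)$ of Definition~\ref{definition:markov-embedding}, specialized to the memoryless case, reads $\sum_{y \in \calS_x} L(y) = 1$ for every $x \in \calX$, i.e.\ $\Sigma \Pi = \Id_\calX$. A one-line index computation then yields $\tilde{A} = \Pi A \Sigma$.

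The conclusion follows from the classical fact that for rectangular matrices $X, Y$ of compatible dimensions, $XY$ and $YX$ share the same non-zero spectrum with multiplicities. Applied to $X = \Pi$ and $Y = A\Sigma$, we obtain that $\tilde{A} = \Pi (A\Sigma)$ and $A = (A\Sigma)\Pi$ have identical non-zero eigenvalues. Both matrices are entry-wise non-negative with strongly connected support ($\calD$ and $\calE$ respectively, the latter because any $\calD$-path can be lifted to an $\calE$-path by arbitrary selections in each fiber $\calS_x$), so Perron--Frobenius guarantees that $\rho(A)$ and $\rho(\tilde{A})$ are attained as positive real eigenvalues, forcing $\rho(\tilde{A}) = \rho(A)$. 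The only step requiring care is the factorization $\tilde{A} = \Pi A \Sigma$, which is straightforward, so I expect no substantive obstacle. An alternative, heavier route via Lemma~\ref{lemma:induced-markov-map-for-path-measure} is available: one checks that the induced morphism $M_\star$ preserves the Bhattacharyya coefficient of $n$-step path distributions, then identifies that coefficient with $\rf{\pi} \cdot A^{n-1} \cdot \unit\trn$ (and its embedded analogue) and extracts $\rho$ as $\lim_n n$-th root.
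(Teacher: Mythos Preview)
Your argument is correct and takes a genuinely different route from the paper. You work directly at the matrix level: factor $\tilde A = \Pi A \Sigma$ with $\Sigma\Pi = \Id_\calX$, then invoke the $XY$/$YX$ spectral identity to match the nonzero spectra of $A$ and $\tilde A$, which already forces $\rho(\tilde A)=\rho(A)$ (the Perron--Frobenius step is in fact dispensable here, since equality of the nonzero spectra alone gives equality of the spectral radii). The paper instead passes through the R\'enyi divergence rate of order $1/2$: it identifies $K(P,P')$ with $1-\exp\bigl(-\tfrac12 R_{1/2}\bigr)$ via \cite{rached2001renyi}, and then uses Lemma~\ref{lemma:induced-markov-map-for-path-measure} together with invariance of $R_{1/2}$ under Markov morphisms to conclude --- this is precisely the ``heavier route'' you sketch at the end. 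Your approach is more self-contained (no external rate result, no need for Lemma~\ref{lemma:induced-markov-map-for-path-measure}) and exposes the elementary linear-algebraic reason behind the invariance; the paper's approach is more conceptual and would transfer verbatim to any contrast arising from an $f$-divergence or R\'enyi rate, not only the order-$1/2$ case.
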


\begin{proof}

We recall for two distributions $\mu, \nu \in \calP(\calX)$ the definition of $R_{1/2}$ the R\'{e}nyi entropy of order $1/2$,
\begin{equation*}
\begin{split}
    R_{1/2}(\mu \| \nu) \eqdef -2 \log \left( \sum_{x \in \calX} \sqrt{\mu(x) \nu(x)} \right), 
\end{split}
\end{equation*}
and note that $R_{1/2}$ is closely related to the Hellinger distance between $\mu$ and $\nu$.
This definition extends to a divergence rate between stochastic processes $(X_t)_{t \in \N}, (X'_t)_{t \in \N}$ on $\calX$ as follows
\begin{equation*}
    R_{1/2}\left((X_t)_{t \in \N} \| (X'_t)_{t \in \N}\right) = \lim_{n \to \infty} \frac{1}{n} R_{1/2}\left(X_1^n \| X_1'^n \right),
\end{equation*}
and in the irreducible time-homogeneous Markovian setting where $(X_t)_{t \in \N}, (X'_t)_{t \in \N}$ evolve according to transition matrices $P$ and $P'$, the above reduces \cite{rached2001renyi} to 
\begin{equation*}
    R_{1/2}\left((X_t)_{t \in \N} \| (X'_t)_{t \in \N}\right) = -2 \log \rho (P^{\circ 1/2} \circ P'^{\circ 1/2}) = -2 \log (1 - K(P, P')).
\end{equation*}
Reorganizing terms and plugging-for the embedded stochastic matrices,
\begin{equation*}
\begin{split}
    K(L_\star P, L_\star \rf{P}) &=  1 - \exp \left(-\frac{1}{2} \lim_{n \to \infty} \frac{1}{n} R_{1/2}\left( L_\star Q^n \| L_\star \rf{Q}^n \right) \right), \\
\end{split}
\end{equation*}
where $L_\star \rf{Q}^n$ is the distribution over stationary paths of length $n$ induced by the embedded $L_\star \rf{P}$. For any $n \in \N$, from Lemma~\ref{lemma:induced-markov-map-for-path-measure} and information monotonicity of the R\'{e}nyi divergence,
$R_{1/2}\left( L_\star Q^n \| L_\star \rf{Q}^n \right)
    = R_{1/2}\left(  Q^n \| \rf{Q}^n \right),$
hence our claim.
\end{proof}

\section{Proof of Theorem~\ref{theorem:reduction-theorem}}
\label{section:proof-theorem}

We assume that $P$ and $\rf{P}$ are in $\Vtest$.
We reduce the problem as follows.
We construct $\sigma^{\rf{\pi}}_\star$, the symmetrizer \footnote{If we wish to test for the identity of multiple chains against a same reference, we only need to perform this step once.} defined in Section~\ref{section:symmetrization}. 
We proceed to embed both the reference chain (using Definition~\ref{definition:markov-embedding}) and
and the unknown trajectory (using the operational definition in \cite[Section~3.1]{wolfer2022geometric}). We invoke the  tester of \cite{pmlr-v99-cherapanamjeri19a} as a black box, and report its answer.

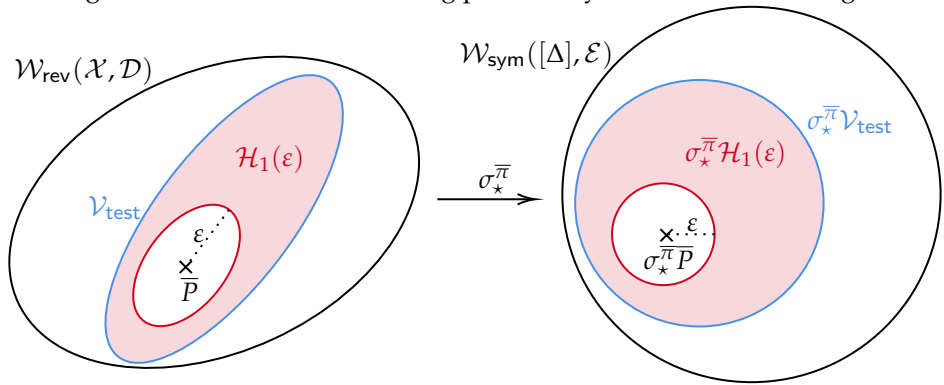
\begin{figure}
\caption{Reduction of the testing problem by isometric embedding.}
\centering

\tikzset{every picture/.style={line width=0.75pt}} %

\begin{tikzpicture}[x=0.60pt,y=0.60pt,yscale=-1,xscale=1]

\draw  [color={rgb, 255:red, 74; green, 144; blue, 226 }  ,draw opacity=1 ][fill={rgb, 255:red, 208; green, 2; blue, 27 }  ,fill opacity=0.15 ] (109.11,245.12) .. controls (90.42,230.92) and (104.76,180.6) .. (141.14,132.71) .. controls (177.51,84.82) and (222.15,57.51) .. (240.83,71.7) .. controls (259.52,85.89) and (245.18,136.22) .. (208.8,184.11) .. controls (172.43,232) and (127.79,259.31) .. (109.11,245.12) -- cycle ;
\draw    (308,146) -- (367,146) ;
\draw [shift={(369,146)}, rotate = 180] [color={rgb, 255:red, 0; green, 0; blue, 0 }  ][line width=0.75]    (10.93,-3.29) .. controls (6.95,-1.4) and (3.31,-0.3) .. (0,0) .. controls (3.31,0.3) and (6.95,1.4) .. (10.93,3.29)   ;
\draw  [color={rgb, 255:red, 74; green, 144; blue, 226 }  ,draw opacity=1 ][fill={rgb, 255:red, 208; green, 2; blue, 27 }  ,fill opacity=0.15 ] (394,148.5) .. controls (394,105.7) and (428.7,71) .. (471.5,71) .. controls (514.3,71) and (549,105.7) .. (549,148.5) .. controls (549,191.3) and (514.3,226) .. (471.5,226) .. controls (428.7,226) and (394,191.3) .. (394,148.5) -- cycle ;
\draw  [color={rgb, 255:red, 208; green, 2; blue, 27 }  ,draw opacity=1 ][fill={rgb, 255:red, 255; green, 255; blue, 255 }  ,fill opacity=1 ] (417,168) .. controls (417,150.33) and (431.33,136) .. (449,136) .. controls (466.67,136) and (481,150.33) .. (481,168) .. controls (481,185.67) and (466.67,200) .. (449,200) .. controls (431.33,200) and (417,185.67) .. (417,168) -- cycle ;
\draw  [color={rgb, 255:red, 208; green, 2; blue, 27 }  ,draw opacity=1 ][fill={rgb, 255:red, 255; green, 255; blue, 255 }  ,fill opacity=1 ] (124.93,222.62) .. controls (113.88,214.23) and (116.83,191.74) .. (131.52,172.4) .. controls (146.21,153.07) and (167.07,144.19) .. (178.13,152.59) .. controls (189.18,160.98) and (186.23,183.46) .. (171.54,202.8) .. controls (156.85,222.14) and (135.98,231.01) .. (124.93,222.62) -- cycle ;
\draw  [dash pattern={on 0.84pt off 2.51pt}]  (178.13,152.59) -- (151.53,187.6) ;
\draw  [dash pattern={on 0.84pt off 2.51pt}]  (481,168) -- (449,168) ;
\draw   (46.45,211) .. controls (26.02,167.01) and (64.22,105.92) .. (131.78,74.55) .. controls (199.33,43.17) and (270.66,53.4) .. (291.09,97.39) .. controls (311.51,141.38) and (273.31,202.47) .. (205.75,233.85) .. controls (138.2,265.22) and (66.87,254.99) .. (46.45,211) -- cycle ;
\draw  [color={rgb, 255:red, 0; green, 0; blue, 0 }  ,draw opacity=1 ] (386,143) .. controls (386,77.83) and (438.83,25) .. (504,25) .. controls (569.17,25) and (622,77.83) .. (622,143) .. controls (622,208.17) and (569.17,261) .. (504,261) .. controls (438.83,261) and (386,208.17) .. (386,143) -- cycle ;
\draw   (147.2,184.8) -- (154.8,192.41)(154.8,184.8) -- (147.2,192.41) ;
\draw   (447.2,164.8) -- (454.8,172.41)(454.8,164.8) -- (447.2,172.41) ;

\draw (332,122) node [anchor=north west][inner sep=0.75pt]   [align=left] {$\sigma_\star^{\rf{\pi}}$};
\draw (42,55) node [anchor=north west][inner sep=0.75pt]  [color={rgb, 255:red, 0; green, 0; blue, 0 }  ,opacity=1 ] [align=left] {$\calW \rev(\calX, \calD)$};
\draw (320,45) node [anchor=north west][inner sep=0.75pt]   [align=left] {$\calW \sym([\Delta], \calE)$};
\draw (154,163) node [anchor=north west][inner sep=0.75pt]   [align=left] {$\eps$};
\draw (462,156) node [anchor=north west][inner sep=0.75pt]   [align=left] {$\eps$};
\draw (146,193) node [anchor=north west][inner sep=0.75pt]   [align=left] {$\rf{P}$};
\draw (435,173) node [anchor=north west][inner sep=0.75pt]   [align=left] {$\sigma^{\rf{\pi}}_\star \rf{P}$};
\draw (180,110) node [anchor=north west][inner sep=0.75pt]  [color={rgb, 255:red, 208; green, 2; blue, 27 }  ,opacity=1 ] [align=left] {$\calH_1(\eps)$};
\draw (461,105) node [anchor=north west][inner sep=0.75pt]  [color={rgb, 255:red, 208; green, 2; blue, 27 }  ,opacity=1 ] [align=left] {$\sigma_\star^{\rf{\pi}} \calH_1(\eps)$};
\draw (88,142) node [anchor=north west][inner sep=0.75pt]  [color={rgb, 255:red, 74; green, 144; blue, 226 }  ,opacity=1 ] [align=left] {$\Vtest$};
\draw (537,85) node [anchor=north west][inner sep=0.75pt]  [color={rgb, 255:red, 74; green, 144; blue, 226 }  ,opacity=1 ] [align=left] {$\sigma_\star^{\rf{\pi}} \Vtest$};

\end{tikzpicture}
\end{figure}

\paragraph{Completeness case.} It is immediate that $P = \rf{P} \implies L_\star P = L_\star \rf{P}$.

\paragraph{Soundness case.}

From Lemma~\ref{lemma:contrast-preservation}, $K(P, \rf{P}) > \eps \implies K(\sigma^{\rf{\pi}}P, \sigma^{\rf{\pi}}\rf{P}) > \eps$.

As a consequence of \cite[Theorem~10]{pmlr-v99-cherapanamjeri19a}, the sample complexity of testing is upper bounded by $\bigO(\Delta/\eps^4)$.
With $\rf{\pi}_\star = p_1/\Delta$ and treating $p_1$ as a small constant, we recover the known sample complexity.

\bibliography{bibliography}
\bibliographystyle{abbrvnat}
\end{document}